\definecolor{red}{RGB}{255,25,25}
\definecolor{blue}{RGB}{25,50,200}
\newtheorem{theorem}{Theorem}[section]
\crefname{theorem}{Theorem}{Theorems}
\newtheorem{lemma}[theorem]{Lemma}
\crefname{lemma}{Lemma}{Lemmas}
\crefname{proposition}{Proposition}{Propositions}
\newtheorem{prop}[theorem]{Proposition}
\crefname{prop}{Proposition}{Propositions}
\crefname{corollary}{Corollary}{Corollaries}
\crefname{cor}{Corollary}{Corollaries}
\crefname{conjecture}{Conjecture}{Conjectures}
\newtheorem{conj}[theorem]{Conjecture}
\crefname{conj}{Conjecture}{Conjectures}
\newtheorem*{conj*}{Conjecture}
\crefname{conj}{Conjecture}{Conjectures}
\theoremstyle{definition}
\crefname{definition}{Definition}{Definitions}
\newtheorem{defn}[theorem]{Definition}
\crefname{defn}{Definition}{Definitions}
\crefname{example}{Example}{Examples}
\crefname{notation}{Notation}{Notation}
\newtheorem*{notation*}{Notation}
\crefname{notation}{Notation}{Notation}
\crefname{problem}{Problem}{Problems}
\newtheorem{question}[theorem]{Question}
\crefname{question}{Question}{Questions}
\crefname{condition}{Condition}{Conditions}
\crefname{assumption}{Assumption}{Assumptions}
\theoremstyle{remark}
\newtheorem{rmk}[theorem]{Remark}
\crefname{rmk}{Remark}{Remarks}
\newtheorem*{rmk*}{Remark}
\crefname{rmk}{Remark}{Remarks}
\crefname{remark}{Remark}{Remarks}
\crefname{fact}{Fact}{Facts}
\crefname{claim}{Claim}{Claims}
\newtheorem*{claim*}{Claim}
\crefname{claim}{Claim}{Claims}
\crefname{step}{Step}{Steps}
\crefname{case}{Case}{Cases}
\numberwithin{equation}{section}
\newcommand{\bbC}{\mathbb{C}}
\newcommand{\bbR}{\mathbb{R}}
\newcommand{\bR}{\mathbb{R}}
\newcommand{\NE}{\overline{\operatorname{NE}}}
\begin{document}

\title[Bounding cohomology]{Bounding cohomology on a smooth projective surface with Picard number 2}

\author{Sichen Li}
\address{School of Mathematical Sciences, Fudan University, 220 Handan Road, Yangpu District, Shanghai 200433, People's Republic of China}
\email{\href{mailto:sichenli@fudan.edu.cn}{sichenli@fudan.edu.cn}}
\urladdr{\url{https://www.researchgate.net/profile/Sichen_Li4}}
\begin{abstract}
The following conjecture arose out of discussions between B. Harbourne, J. Ro\'e, C. Cilberto and R. Miranda: for a smooth projective surface $X$ there exists a positive constant $c_X$ such that $h^1(\mathcal O_X(C))\le c_X h^0(\mathcal O_X(C))$ for every prime divisor $C$ on $X$.
When the Picard number $\rho(X)=2$, we prove that if either the Kodaira dimension $\kappa(X)=1$ and $X$ has a negative curve or $X$ has two negative curves, then this conjecture holds for $X$.
\end{abstract}

\subjclass[2010]{
primary 14C20
}


\keywords{bounded negativity conjecture, bounding cohomology, Picard number 2}

\maketitle


\section{Introduction}
In this note we work over the field $\bbC$ of complex numbers.
By a $(negative)~ curve$ on a surface we will mean a reduced, irreducible curve (with negative self-intersection).
By a $(-k)$-$curve$, we mean a negative curve $C$ with $C^2=-k<0$.

The bounded negativity conjecture (BNC for short)  is one of the most intriguing problems in the theory of projective surfaces and can be formulated as follows.
\begin{conj}\cite[Conjecture 1.1]{B.etc.13}\label{BNC}
For a smooth projective surface $X$ there exists an integer $b(X)\ge0$ such that $C^2\ge-b(X)$ for every curve $C\subseteq X$.
\end{conj}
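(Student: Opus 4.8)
The plan is to attack \cref{BNC} through the adjunction formula. For a reduced irreducible curve $C\subseteq X$ one has $C^2+K_X\cdot C=2p_a(C)-2$ with arithmetic genus $p_a(C)\ge 0$, hence $C^2\ge -2-K_X\cdot C$. This reduces bounded negativity to a single task: bounding $K_X\cdot C$ from above as $C$ ranges over all curves. Whenever $-K_X$ is nef --- for instance if $X$ is del Pezzo, abelian, K3, Enriques, or bielliptic --- one gets $K_X\cdot C\le 0$, so the constant $b(X)=2$ already works.

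Next I would exploit the low Picard number, which is the setting of this note. An irreducible curve $C$ with $C^2<0$ is not big: its Zariski decomposition has positive part $0$, so $\vol(C)=0$ and $[C]$ lies on the boundary of the pseudoeffective cone $\Eff(X)\subseteq\NS(X)_{\bbR}$. When $\rho(X)=2$ this pointed, full-dimensional cone sits in $\bbR^2$ and therefore has exactly two extremal rays; since two distinct negative curves $C_1\ne C_2$ satisfy $C_1\cdot C_2\ge 0>C_i^2$, they cannot lie on a common ray. Hence $X$ carries at most two negative curves, and $b(X)=\max(2,-C_1^2,-C_2^2)$ settles \cref{BNC} for every surface of Picard number $2$, in particular for those studied here.

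The genuinely hard case is a surface carrying infinitely many negative curves, where no finite maximum of $-C^2$ is available and one must bound $K_X\cdot C$ directly. I expect this to be the main obstacle. The prototypical open instance is the blow-up of $\bbP^2$ at ten or more very general points: although this surface is rational, the strict transform of a plane curve of degree $d$ with multiplicities $m_i$ at the points has $K_X\cdot C=-3d+\sum_i m_i$, which is unbounded above, together with $C^2=d^2-\sum_i m_i^2$, so here \cref{BNC} becomes entangled with the Nagata and SHGH conjectures on self-intersections of irreducible curves. A natural attempt is to combine the Bogomolov--Miyaoka--Yau inequality with the Hodge index theorem to constrain the admissible classes $[C]$, but without an a priori bound on the number of negative curves the adjunction estimate cannot be closed --- which is precisely why \cref{BNC} remains open beyond the structured cases above.
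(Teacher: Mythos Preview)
The statement you were asked to prove is \cref{BNC}, the Bounded Negativity Conjecture. The paper does not prove it: it is recorded there purely as an open conjecture quoted from \cite{B.etc.13}, and the paper's actual results concern the related \cref{BH} on bounding $h^1$. So there is no ``paper's own proof'' to compare against.

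Your write-up is not a proof either, and you say as much in your final paragraph. What you have produced is a correct survey of the known easy cases together with an honest identification of the obstruction. The adjunction argument giving $C^2\ge -2-K_X\cdot C$ is right, and it does settle BNC whenever $-K_X$ is nef. Your $\rho(X)=2$ argument is also correct: a negative curve spans an extremal ray of $\Eff(X)$, two distinct negative curves cannot share a ray because $C_1\cdot C_2\ge 0>C_i^2$, and a two-dimensional cone has only two rays, so there are at most two negative curves and $b(X)$ exists trivially. This is in fact the mechanism the paper relies on (via \cite[Claims 2.11 and 2.14]{Li19}) to know that BNC holds in its setting, which is then fed into \cref{MainProp}.

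The only thing to flag is a matter of framing: you should not present this as a ``proof proposal'' for \cref{BNC}. It is a discussion of partial results and of why the general problem is hard, which is accurate, but it does not and cannot close the argument for arbitrary $X$ --- as you yourself note with the example of $\bbP^2$ blown up at ten or more general points.
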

Let us say that a smooth projective surface  $X$ has
\begin{equation*}
b(X)>0
\end{equation*}
 if there is at least one negative curve on $X$.

The main aim of the short note is to study the following conjecture, which implies BNC (cf. \cite[Proposition 14]{C.etc.17}).
\begin{conj}\cite[Conjecture 2.5.3]{B.etc.12}\label{BH}
Let $X$ be a smooth projective surface.
Then there exists a constant $c_X>0$ such that  $h^1(\mathcal O_X(C))\le c_Xh^0(\mathcal O_X(C))$ for every curve $C$ on $X$.
\end{conj}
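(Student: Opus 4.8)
The plan is to reduce the desired inequality to an intersection-theoretic growth estimate and then to isolate precisely where it becomes hard. Fix a prime divisor $C$ and consider the restriction sequence
\[
0 \longrightarrow \mathcal{O}_X \longrightarrow \mathcal{O}_X(C) \longrightarrow \mathcal{O}_C(C) \longrightarrow 0 .
\]
Passing to cohomology and using that $h^1(\mathcal{O}_X)=q(X)$ is the fixed irregularity of $X$, I obtain $h^1(\mathcal{O}_X(C)) \le q(X) + h^1(\mathcal{O}_C(C))$. A reduced irreducible divisor on a smooth surface is a local complete intersection, hence Gorenstein, so Serre duality holds on $C$ with dualizing sheaf $\omega_C = \mathcal{O}_C(K_X+C)$ by adjunction; consequently $h^1(\mathcal{O}_C(C)) = h^0(\omega_C\otimes\mathcal{O}_C(-C)) = h^0(\mathcal{O}_C(K_X))$. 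Since $\mathcal{O}_C(K_X)$ is a line bundle of degree $K_X\cdot C$ on the integral curve $C$, its space of sections has dimension at most $K_X\cdot C+1$ when this is nonnegative, giving the clean bound
\[
h^1(\mathcal{O}_X(C)) \;\le\; q(X) + 1 + (K_X\cdot C)_+ , \qquad (a)_+ := \max\{a,0\}.
\]

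It therefore suffices to establish a linear growth estimate $(K_X\cdot C)_+ \le c'_X\, h^0(\mathcal{O}_X(C))$ over all prime divisors $C$, and I would split this according to whether $C$ moves. If $h^0(\mathcal{O}_X(C)) \ge 2$, then $C$ is a moving prime divisor, so $C^2\ge 0$ and $C$ is nef (a prime divisor meeting every irreducible curve, including itself, nonnegatively). Along a ray in the nef cone the self-intersection, and hence the dimension $h^0(\mathcal{O}_X(C))$, which by Riemann--Roch is at least $\chi(\mathcal{O}_X)+\tfrac12 C\cdot(C-K_X)$, grows quadratically, while $K_X\cdot C$ grows only linearly; the finitely many ``small'' nef classes carry a bounded value of $K_X\cdot C$. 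I would make this uniform using the Hodge index theorem to compare $K_X\cdot C$ with $C^2$, so that the ratio $(K_X\cdot C)/h^0(\mathcal{O}_X(C))$ stays bounded in terms of invariants of $X$.

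The genuinely hard case is $h^0(\mathcal{O}_X(C))=1$, i.e.\ $C$ rigid, where the target inequality degenerates into the demand that $K_X\cdot C$ itself be bounded by the constant $c'_X$. By adjunction $K_X\cdot C = 2p_a(C)-2-C^2$, so for rigid curves Conjecture \ref{BH} amounts to a uniform bound on $-C^2$ together with a uniform bound on the arithmetic genus $p_a(C)$. The first of these is exactly Conjecture \ref{BNC}, bounded negativity, for $X$; this is the step I expect to be the main obstacle, and it is the reason the implication runs from \ref{BH} to \ref{BNC}. In full generality bounded negativity is open, so this strategy cannot close unconditionally, and the conjecture must remain conjectural for an arbitrary surface.

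This analysis also indicates where the problem becomes tractable, which is the point of the present note. When $\rho(X)=2$ the effective cone has at most two extremal rays, so the rigid negative curves on $X$ are severely constrained --- finite in number, or otherwise explicitly describable --- whence bounded negativity and a genus bound hold for $X$, and the Zariski decomposition $C=P+N$ is explicit enough to bound $K_X\cdot C$ by treating its nef part $P$ through the moving-case estimate above and its negative part $N$ through the finitely many negative curves. Carrying out that explicit bookkeeping under the stated hypotheses on $\kappa(X)$ and the negative curves is what yields the result.
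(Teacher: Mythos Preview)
The statement you are addressing is a conjecture, not a theorem, and the paper does not prove it; indeed the paper explicitly recalls that it has been \emph{disproved} by a counterexample with large Picard number. So your conclusion that ``the conjecture must remain conjectural for an arbitrary surface'' understates the situation: it is false in general, and there is no paper proof to compare against. What the paper actually proves is the special case $\rho(X)=2$ of Theorem~\ref{Main-thm}, so the relevant comparison is between your final paragraph and Lemmas~\ref{kappa(X)=1} and~\ref{twoneg}.

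On method, your reduction via the restriction sequence and Serre duality on $C$ to the bound $h^1(\mathcal O_X(C))\le q(X)+1+(K_X\cdot C)_+$ is correct and pleasant, but it is not the paper's route. The paper applies Riemann--Roch on $X$ directly, keeps the $C^2$ term, introduces the ratio $l_C=(K_X\cdot C)/\max\{1,C^2\}$, and shows in Proposition~\ref{MainProp} that a uniform bound on $l_C$, together with BNC and a bound on $C^2$ in the regime $l_C>1$, suffices. Your inequality throws away the favourable $-C^2$ contribution, which is harmless for the obstruction analysis but loses information in the big case.

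There is a genuine gap in your moving case. The assertion that Hodge index and Riemann--Roch make $(K_X\cdot C)/h^0(\mathcal O_X(C))$ uniformly bounded for nef prime divisors is exactly the content of Question~\ref{Que4} and is not automatic; you have given a heuristic (quadratic versus linear growth along a ray) but no uniform argument across rays, and your Riemann--Roch lower bound on $h^0$ omits the $h^2$ term. The paper does not attempt this in general either: in the $\rho(X)=2$ setting it instead writes every curve as $a_1C_1+a_2C_2$ in terms of the two extremal rays, uses the explicit description of $\Nef(X)$ (Proposition~\ref{nef}) to extract lower bounds on the coefficients $a_i$, and from these computes an explicit upper bound for $l_D$ and, when the $a_i$ are small, for $D^2$. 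Your Zariski-decomposition sketch points in the same direction but would need these concrete estimates to close; as written it is a plan, not a proof.
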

On the other hand, the authors of \cite{B.etc.12} disproved  \cref{BH} by giving a counterexample with a large Picard number (cf. \cite[Corollary 3.1.2]{B.etc.12}).
 However, they pointed out that it could still be true that \cref{BH} holds when restricted to rational surfaces (cf. \cite[Proposition 3.1.3]{B.etc.12}).
On the one hand, there exists a new evidence of BNC (cf. \cite[Theorem 1.6]{Li19}).
 This motivates us to consider that whether \cref{BH} is true for $X$ when the Picard number  $\rho(X)=2$ and $b(X)>0$.

Below is our main theorem.
\begin{theorem}\label{Main-thm}
Let $X$ be a smooth projective surface with Picard number 2.
If the Kodaira dimension $\kappa(X)=1$ and $b(X)>0$ or $X$ has two negative curves, then \cref{BH} holds for $X$.
\end{theorem}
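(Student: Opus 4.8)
The plan is to reduce the stated inequality, via Riemann--Roch and adjunction, to a uniform upper bound on $p_a(C)-C^2$ over all prime divisors $C$, and then to prove that bound by analysing the position of $[C]$ in the two--dimensional cones $\Eff(X)$ and $\Nef(X)$.

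\emph{Step 1: reduction to a genus bound.} For any prime divisor $C$ on $X$ the divisor $C$ is effective, so $h^2(\mathcal{O}_X(C))=h^0(\mathcal{O}_X(K_X-C))\le h^0(\mathcal{O}_X(K_X))=p_g(X)$, while Riemann--Roch together with the adjunction formula $2p_a(C)-2=C^2+C\cdot K_X$ gives
\[
h^1(\mathcal{O}_X(C))=h^0(\mathcal{O}_X(C))+h^2(\mathcal{O}_X(C))+p_a(C)-C^2-1-\chi(\mathcal{O}_X).
\]
Using $\chi(\mathcal{O}_X)=1-q(X)+p_g(X)$ and $h^0(\mathcal{O}_X(C))\ge 1$, one sees that if $p_a(C)-C^2\le M$ for some constant $M=M(X)$ and all prime divisors $C$, then \cref{BH} holds for $X$ with $c_X=1+\max\{0,\,M+q(X)-2\}$. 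Thus it suffices to bound $p_a(C)-C^2=1-\tfrac12 C^2+\tfrac12\,C\cdot K_X$ from above.

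\emph{Step 2: disposing of the non-nef curves.} If $C$ is a prime divisor with $C^2<0$, then $[C]$ spans an extremal ray of $\Eff(X)$; since $\rho(X)=2$ there are at most two such curves, and each contributes a fixed value of $p_a(C)-C^2$. If $C$ is a prime divisor with $C^2\ge 0$, then $C\cdot D\ge 0$ for every prime divisor $D$ (distinct prime divisors meet non-negatively, and $C\cdot C=C^2\ge 0$), so $C$ is nef. Hence it remains to bound $p_a(C)-C^2$ for $C$ nef.

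\emph{Step 3: the two cases.} Suppose first $X$ has two negative curves $C_1,C_2$. Then $\Eff(X)=\langle[C_1],[C_2]\rangle$ with $C_i^2<0$, so by the Hodge index theorem the generators $[C_1]^{\perp},[C_2]^{\perp}$ of $\Nef(X)$ both have positive self-intersection and positive mutual intersection; writing a nef class as $[C]=x[C_1]^{\perp}+y[C_2]^{\perp}$ with $x,y\ge 0$, the quantity $C^2$ is a sum of three monomials with positive coefficients, hence $\ge c(x^2+y^2)$ on the cone, while $C\cdot K_X$ is linear in $(x,y)$; therefore $p_a(C)-C^2=1-\tfrac12C^2+\tfrac12 C\cdot K_X$ is bounded above. (In particular no nef prime divisor has $C^2=0$ here.) Suppose instead $\kappa(X)=1$ and $b(X)>0$. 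Then $X$ is minimal and carries a relatively minimal elliptic fibration $f\colon X\to B$ with general fibre $F$, so $F\cdot K_X=2p_a(F)-2=0$; the unique negative curve $C_1$ cannot be contained in a fibre (else $C_1\cdot F=0$ would force $C_1^2=0$), so $f$ has only irreducible fibres and $\Eff(X)=\langle[C_1],[F]\rangle$, $\Nef(X)=\langle[C_1]^{\perp},[F]\rangle$. Now for a nef prime divisor $C$: if $C^2=0$ then $[C]\in\mathbb{R}_{\ge0}[F]$, so $C$ is (the reduction of) a fibre of $f$ and $p_a(C)=1$; if $C^2>0$, write $[C]=x[F]+y[C_1]^{\perp}$ with $x,y\ge 0$, and note that $C\cdot F=y\cdot([C_1]^{\perp}\cdot F)$ is a positive integer, which bounds $y$ away from $0$, so that $C^2\ge (\text{const})\,x+(\text{const})\,y^2$, while $C\cdot K_X=y\cdot([C_1]^{\perp}\cdot K_X)$ is linear in $y$; again $p_a(C)-C^2$ is bounded above. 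In either case we obtain the required constant $M$, and Step 1 concludes.

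The main obstacle I expect is the geometric bookkeeping in the elliptic case: showing that $[F]$ is genuinely the second extremal ray of $\Eff(X)$, that $f$ is relatively minimal with irreducible fibres (so that every prime divisor contained in a fibre has arithmetic genus $1$), and that the integrality $C\cdot F\ge 1$ really does keep the coefficient of $[C_1]^{\perp}$ away from zero. This last point is what forces $C^2$ to grow in the fibre direction, and it is exactly where the hypothesis $b(X)>0$ (the presence of the negative curve $C_1$, giving a nef ray $[C_1]^{\perp}$ of positive self-intersection) is used.
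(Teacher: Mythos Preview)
Your argument is correct and takes a genuinely different route from the paper's.

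The paper does not reduce to a uniform bound on $p_a(C)-C^2$. Instead it introduces the auxiliary invariant $l_D=(K_X\cdot D)/\max\{1,D^2\}$ (with a variant when $D^2=0$) and proves a general criterion (Proposition~\ref{MainProp}): if BNC holds on $X$, if $l_C\le m(X)$ for every curve $C$, and if in addition $D^2\le m(X)\,h^0(\mathcal O_X(D))$ for every curve with $l_D>1$ and $D^2>0$, then Conjecture~\ref{BH} holds. The two cases of the theorem are then handled by verifying these hypotheses via cone computations very similar to yours: in both cases one writes $D\equiv a_1f_1+a_2f_2$ with $f_1,f_2$ the extremal rays, bounds $l_D$ uniformly, and then shows that when the coefficients are small (so that the extra $D^2$--bound might fail) one has $(K_X-D)\cdot D\le 0$, which kills the $h^1$ directly via Riemann--Roch.

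Your reduction is cleaner: bounding $p_a(C)-C^2=1+\tfrac12(K_X\cdot C-C^2)$ once and for all avoids the separate bookkeeping of $l_D$ and of $D^2$ versus $h^0$, and the observation that on $\Nef(X)$ the quadratic form $C^2$ dominates the linear form $K_X\cdot C$ is exactly what is needed. In the two--negative--curves case you do not even need the input $\kappa(X)\ge 0$ that the paper invokes from \cite{Li19}. Conversely, the paper's criterion via $l_D$ is packaged so as to apply to other situations; it trades directness for reusability. Your worry in the last paragraph is in fact harmless: in the elliptic case, after dropping the non-positive $-xy$ term, the bound $1+\tfrac{\lambda}{2}y(F\cdot C_1)^2-\tfrac12 y^2(-C_1^2)(F\cdot C_1)^2$ is a downward parabola in $y$, hence bounded above for all $y\ge 0$ without any appeal to the integrality of $C\cdot F$.
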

\begin{rmk}
In \cite[Claim 2.11]{Li19},  we give a classification of a smooth projective surface  $X$ with $\rho(X)=2$ and two negative curves.
\end{rmk}
\section{Preliminaries}
We first recall  the following question posed in \cite{C.etc.17}.
\begin{question}\cite[Question 4]{C.etc.17}\label{Que4}
Does there exist a constant $m(X)$ such that $\frac{(K_X\cdot D)}{D^2}<m(X)$ for any effective divisor $D$ with $D^2>0$ on a smooth projective surface  $X$?
\end{question}
 If  \cref{BH} is true for a smooth projective surface $X$, then $X$ is affirmative for  \cref{Que4} (cf. \cite[Proposition 15]{C.etc.17}).
This motivates us to give the following definition.
\begin{defn}
Let $X$ be a smooth projective surface.
\begin{enumerate}
 \item[(1)] For every $\bbR$-divisor $D$ with $D^2\ne0$ on $X$, we define a value of $D$ as follows:
\begin{equation*}
                                                       l_D:=\frac{(K_X\cdot D)}{\max\bigg\{ 1, D^2\bigg\}}.
\end{equation*}
\item[(2)] For every $\bbR$-divisor  $D$ with $D^2=0$ on $X$, we define a value of $D$ as follows:
\begin{equation*}
                           l_D:=\frac{(K_X\cdot D)}{\max\bigg\{1,h^0(\mathcal O_X(D))\bigg\}}.
\end{equation*}
\end{enumerate}
\end{defn}
The following is a numerical characterization of \cref{BH}.
\begin{prop}\label{MainProp}
Let $X$ be a smooth projective surface.
If $X$ satisfies the BNC and there exists a positive constant $m(X)$ such that $l_C\le m(X)$ for every curve $C$ on $X$ and  $D^2\le m(X)h^0(\mathcal O_X(D))$ for every curve $D$ with $l_D>1$ and $D^2>0$ on $X$, then $X$ satisfies  \cref{BH}.
\end{prop}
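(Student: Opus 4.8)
The plan is to read \cref{BH} off the Riemann--Roch formula on $X$: write $h^{1}(\mathcal O_X(C))$ in terms of $h^{0}$, $h^{2}$ and intersection numbers, replace $h^{2}$ by a constant via Serre duality, and then control the remaining intersection term $K_X\cdot C-C^{2}$ by a short case analysis driven by the two numerical hypotheses and by the BNC.

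First I would fix a curve $C$ on $X$; since $C$ is reduced and irreducible it is an effective nonzero divisor, so $h^{0}(\mathcal O_X(C))\ge 1$. Riemann--Roch on $X$ gives
\[
h^{0}(\mathcal O_X(C))-h^{1}(\mathcal O_X(C))+h^{2}(\mathcal O_X(C))=\chi(\mathcal O_X)+\tfrac12\bigl(C^{2}-K_X\cdot C\bigr),
\]
hence
\[
h^{1}(\mathcal O_X(C))=h^{0}(\mathcal O_X(C))+h^{2}(\mathcal O_X(C))-\chi(\mathcal O_X)+\tfrac12\bigl(K_X\cdot C-C^{2}\bigr).
\]
By Serre duality $h^{2}(\mathcal O_X(C))=h^{0}(\mathcal O_X(K_X-C))$, and tensoring $0\to\mathcal O_X(-C)\to\mathcal O_X\to\mathcal O_C\to 0$ by $\mathcal O_X(K_X)$ gives an injection $H^{0}(\mathcal O_X(K_X-C))\hookrightarrow H^{0}(\mathcal O_X(K_X))$; thus $h^{2}(\mathcal O_X(C))\le h^{0}(\mathcal O_X(K_X))$, a constant depending only on $X$. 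Writing $e(X):=h^{0}(\mathcal O_X(K_X))-\chi(\mathcal O_X)$, a fixed integer, it therefore suffices to produce constants $A,B\ge 0$ depending only on $X$ with $\tfrac12\bigl(K_X\cdot C-C^{2}\bigr)\le A\,h^{0}(\mathcal O_X(C))+B$: since $h^{0}(\mathcal O_X(C))\ge 1$, substituting into the identity above then yields $h^{1}(\mathcal O_X(C))\le\bigl(1+A+B+\max\{0,e(X)\}\bigr)h^{0}(\mathcal O_X(C))$, which is \cref{BH}.

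Next I would bound $K_X\cdot C-C^{2}$ in four cases. If $C^{2}<0$, then $\max\{1,C^{2}\}=1$, so $K_X\cdot C=l_C\le m(X)$, and the BNC gives $-C^{2}\le b(X)$; hence $K_X\cdot C-C^{2}\le m(X)+b(X)$. If $C^{2}=0$, then $K_X\cdot C=l_C\cdot\max\{1,h^{0}(\mathcal O_X(C))\}\le m(X)\,h^{0}(\mathcal O_X(C))$, so $K_X\cdot C-C^{2}\le m(X)\,h^{0}(\mathcal O_X(C))$. If $C^{2}>0$ and $l_C\le 1$, then $K_X\cdot C\le C^{2}$ by the definition of $l_C$, so $K_X\cdot C-C^{2}\le 0$. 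Finally, if $C^{2}>0$ and $l_C>1$, then the second hypothesis gives $C^{2}\le m(X)\,h^{0}(\mathcal O_X(C))$, while $l_C\le m(X)$, so $K_X\cdot C-C^{2}=(l_C-1)\,C^{2}\le(m(X)-1)\,m(X)\,h^{0}(\mathcal O_X(C))$. Setting $M:=\max\{\,m(X)+b(X),\ (m(X)-1)\,m(X),\ 0\,\}$, in every case $K_X\cdot C-C^{2}\le M\,h^{0}(\mathcal O_X(C))$, and taking $A=M/2$, $B=0$ above concludes the proof.

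The argument is short, and the one place where genuine input is needed is the case $C^{2}>0$: there $-\tfrac12 C^{2}$ is negative and cannot be discarded, while $\tfrac12 K_X\cdot C$ is a priori only bounded by $\tfrac12 m(X)\,C^{2}$, which grows with $C^{2}$. The hypothesis $C^{2}\le m(X)\,h^{0}(\mathcal O_X(C))$ for curves with $l_C>1$ is exactly what turns this growth into a bound linear in $h^{0}(\mathcal O_X(C))$, whereas for $l_C\le 1$ the combination $K_X\cdot C-C^{2}$ is automatically nonpositive; so I expect the dichotomy on $l_C$ in that case, rather than any individual estimate, to be the main point to organize correctly. (Incidentally, the BNC is used only to bound $-C^{2}$ when $C^{2}<0$, and even there the adjunction formula $C^{2}+K_X\cdot C=2p_a(C)-2\ge-2$ already gives $C^{2}\ge-2-K_X\cdot C\ge-2-m(X)$; but invoking the BNC as in the statement is cleanest.)
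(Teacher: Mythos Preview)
Your proof is correct and follows essentially the same route as the paper: Riemann--Roch plus the Serre-duality bound $h^{2}(\mathcal O_X(C))\le p_g(X)$, followed by the same case split on the sign of $C^{2}$ and, when $C^{2}>0$, on whether $l_C\le 1$ or $l_C>1$. The only differences are cosmetic---you package everything into a single constant $M$, whereas the paper records a separate $c_X$ in each case---and your closing remark that the BNC hypothesis is in fact redundant (via adjunction and $K_X\cdot C\le m(X)$) is a nice observation the paper does not make.
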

\begin{proof}
Take a curve $C$ on $X$. Note that by Serre duality (cf. \cite[Corollary III.7.7 and III.7.12]{Hartshorne77}), $h^2(\mathcal O_X(C))=h^0(\mathcal O_X(K_X-C))\le p_g(X)$.
As a result,
\begin{equation}\label{eq1}
h^2(\mathcal O_X(C))-\chi(\mathcal O_X)\le  q(X)-1.
\end{equation}
Here, $p_g(X)$ and $q(X)$ are the geometric genus and the irregularity of $X$ respectively.

Our main condition is the following:

(*) \label{*} There exists a positive constant $m(X)$ such that $l_C\le m(X)$ for every curve $C$ on $X$ and  $D^2\le m(X)h^0(\mathcal O_X(D))$ for every curve $D$ with $l_D>1$ and $D^2>0$ on $X$.

We divide the proof into the following three cases.

Case~(i).
Suppose $C^2>0$. Then by Riemann-Roch theorem (cf. \cite[Theorem V.1.6]{Hartshorne77}),
\begin{equation}\label{eq2}
                                             h^1(\mathcal O_X(C))=h^0(\mathcal O_X(C))+h^2(\mathcal O_X(C))-\chi(\mathcal O_X)+\frac{C^2(l_C-1)}{2}.
\end{equation}
If $l_C\le1$, then Equation (\ref{eq1}) and (\ref{eq2}) imply  that $h^1(\mathcal O_X(C))\le h^0(\mathcal O_X(C))+q(X)-1$, which is the desired result by $c_X:=q(X)$.
If $l_C>1$, then Equation (\ref{eq1}) and (\ref{eq2})  and the condition (*)  imply that $2h^1(\mathcal O_X(C))\le (m^2(X)-m(X)+2)h^0(\mathcal O_X(C))+2(q(X)-1)$,
which is the desired result by $2c_X:=m^2(X)-m(X)+2q(X)$.

Case~(ii).
Suppose $C^2=0$. Then by Riemann-Roch theorem,
\begin{equation}\label{eq3}
                                             2h^1(\mathcal O_X(C))=2h^2(\mathcal O_X(C))-2\chi(\mathcal O_X)+h^0(\mathcal O_X(C))(l_C+2),
\end{equation}
which, Equation (\ref{eq1}) and the condition (*) imply that
\begin{equation*}
                                             2h^1(\mathcal O_X(C))\le 2(q(X)-1)+h^0(\mathcal O_X(C))(m(X)+2),
\end{equation*}
which is the desired result by $2c_X:=m(X)+2q(X)$.

Case~(iii).
Suppose $C^2<0$. Then $h^0(\mathcal O_X(C))=1$. Since $X$ satisfies the BNC, there exists a positive constant $b(X)$ such that every curve $C$ on $X$ has $C^2\ge-b(X)$. By Riemann-Roch theorem,
\begin{equation}\label{eq4}
                                             2h^1(\mathcal O_X(C))=2+2h^2(\mathcal O_X(C))-2\chi(\mathcal O_X)+l_C-C^2,
\end{equation}
which, Equation (\ref{eq1}) and  the condition (*)  imply that $2h^1(\mathcal O_X(C))\le 2q(X)+m(X)+b(X)$, which is the desired result by $2c_X:=2q(X)+m(X)+b(X)$.

In all, we  complete the proof of \cref{MainProp}.
\end{proof}
\section{The proof of \cref{Main-thm}}
The proof of \cref{Main-thm} is motivated by the following result.
\begin{prop}\label{>0}
Let $X$ be a smooth projective surface with $\rho(X)=2$. Then the following statements hold.
\begin{enumerate}
\item[(i)] $\NE(X)=\bbR_{\ge0}[f_1]+\bbR_{\ge0}[f_2]$, $f_1^2\le0, f_2^2\le0$ and $f_1\cdot f_2>0$. Here, $f_1, f_2$ are extremal rays.
\item[(ii)] If a curve $C$ has $C^2\le0$, then $C\equiv af_1$ or $C\equiv bf_2$ for some $a,b\in\mathbb R_{>0}$.
\item[(iii)] Suppose a divisor $D\equiv a_1f_1+a_2f_2$ with $a_1,a_2>0$ in (i).  Then $D$ is big. Moreover, if $D$ is a curve, then $D$ is nef and big and $D^2>0$.
\end{enumerate}
\end{prop}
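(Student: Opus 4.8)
The plan is to read off all three statements from the geometry of the two-dimensional closed convex cone $\NE(X)=\Eff(X)$ inside $N^1(X)_\bbR\cong\bbR^2$, using the Hodge index theorem and Kleiman's ampleness criterion. For (i): the cone $\NE(X)=\Eff(X)$ is a closed convex cone in the plane $N^1(X)_\bbR$; it is two-dimensional, since it contains the nonempty open ample cone (a positive multiple of an ample class is effective), and it is salient, since if a nonzero class $v$ and $-v$ both lay in $\NE(X)$ then Kleiman's criterion applied to an ample $h$ would give both $h\cdot v>0$ and $h\cdot(-v)>0$. A two-dimensional salient closed convex cone in $\bbR^2$ is the sum of its two boundary rays, so $\NE(X)=\bbR_{\ge0}[f_1]+\bbR_{\ge0}[f_2]$ for classes $f_1,f_2$ spanning the extremal rays. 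By the Hodge index theorem (\cite[Theorem V.1.9]{Hartshorne77}) the intersection form on $N^1(X)_\bbR$ has signature $(1,1)$, so the positive cone $\{x:x^2>0\}$ has two connected components, and the one $\mathcal C^{+}$ containing the ample classes consists of big classes by Riemann--Roch, hence is contained in $\operatorname{int}\NE(X)$. Since $f_i$ lies on the boundary of $\NE(X)$ while $f_i\cdot h>0$ by Kleiman's criterion (as $f_i\in\NE(X)\setminus\{0\}$), we cannot have $f_i^2>0$; thus $f_i^2\le0$. Writing an ample class as $h\equiv\alpha f_1+\beta f_2$ with $\alpha,\beta>0$ (it lies in $\operatorname{int}\NE(X)$), the relation $0<h^2=\alpha^2f_1^2+2\alpha\beta\,(f_1\cdot f_2)+\beta^2f_2^2$ combined with $f_1^2,f_2^2\le0$ forces $f_1\cdot f_2>0$.

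For (ii): let $C$ be a curve with $C^2\le0$. If $C^2<0$, then $C$ spans an extremal ray of $\NE(X)$ by the standard fact that an irreducible curve of negative self-intersection is extremal, so $[C]\equiv af_1$ or $[C]\equiv bf_2$. If $C^2=0$, suppose $[C]$ is not on an extremal ray; then $[C]$ lies in $\operatorname{int}\NE(X)=\operatorname{int}\Eff(X)$, so $C$ is big. Moreover $C$ is nef: if $C\cdot\Gamma<0$ for an irreducible curve $\Gamma$, then $\Gamma=C$ (distinct irreducible curves meet nonnegatively), giving $C^2<0$, a contradiction. But a nef and big divisor on a surface satisfies $C^2=\vol(C)>0$, contradicting $C^2=0$. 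Hence $[C]$ lies on an extremal ray, and the coefficient is positive because $[C]\ne0$.

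For (iii): if $D\equiv a_1f_1+a_2f_2$ with $a_1,a_2>0$, then, $f_1$ and $f_2$ being linearly independent, $[D]$ lies in $\operatorname{int}(\bbR_{\ge0}[f_1]+\bbR_{\ge0}[f_2])=\operatorname{int}\NE(X)=\operatorname{int}\Eff(X)$, so $D$ is big. Suppose in addition $D$ is a curve. Then $D^2>0$: otherwise $D^2\le0$, and (ii) would force $[D]$ onto an extremal ray, contradicting $a_1,a_2>0$. Finally $D$ is nef: if $D\cdot\Gamma<0$ for an irreducible curve $\Gamma$, then $\Gamma=D$, giving $D^2<0$, contrary to $D^2>0$.

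I expect part (i) to be the main obstacle: the classes $f_1,f_2$ are only guaranteed to be pseudo-effective (the extremal rays may be irrational), so one cannot argue via irreducibility of the $f_i$, and the signs of $f_i^2$ and $f_1\cdot f_2$ must be extracted purely from convex geometry together with the Hodge index theorem and Kleiman's criterion. Parts (ii) and (iii) are then short deductions, the only recurring subtlety being the identification, on a surface, of $\operatorname{int}\NE(X)$ with the cone of big classes.
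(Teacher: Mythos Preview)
Your argument is correct and follows essentially the same route as the paper, which simply cites \cite[Lemma 1.22]{KM98} for (i)--(ii) and \cite[Theorem 2.2.26]{Lazarsfeld04} for (iii); you have unpacked those citations into the underlying Hodge-index/Kleiman/convex-geometry argument. The only cosmetic difference is the order in (iii): the paper first observes that an irreducible curve in the interior of $\NE(X)$ is nef and then concludes $D^2>0$ from nef\,$+$\,big, whereas you obtain $D^2>0$ via (ii) and deduce nefness afterwards.
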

\begin{proof}
By \cite[Lemma 1.22]{KM98}, (i) and (ii) are clear since $\rho(X)=2$. For (iii), $D\equiv a_1f_1+a_2f_2$ with $a_1, a_2>0$ is an interior point of Mori cone, then by \cite[Theorem 2.2.26]{Lazarsfeld04}, $D$ is big. Moreover, if $D$ is a curve, then $D$ is nef. As a result, $D^2>0$.
\end{proof}
\begin{lemma}\label{kappa(X)=1}
Let $X$ be a smooth projective surface with $\rho(X)=2$.
If $\kappa(X)=1$ and $b(X)>0$, then $X$ satisfies  \cref{BH}.
\end{lemma}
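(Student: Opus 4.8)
The plan is to verify the hypotheses of \cref{MainProp}, i.e.\ to produce one constant $m(X)>0$ with $X$ satisfying the BNC, with $l_C\le m(X)$ for every curve $C$ on $X$, and with $D^2\le m(X)h^0(\mathcal O_X(D))$ for every curve $D$ having $l_D>1$ and $D^2>0$; the lemma then follows at once from \cref{MainProp}.

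First I would pin down the geometry of $X$. A minimal surface of Kodaira dimension $1$ carries an elliptic fibration whose general fibre is nef of self-intersection $0$ and hence not ample, so such a surface has Picard number $\ge 2$; consequently, contracting a $(-1)$-curve on $X$ would produce a surface of Kodaira dimension $1$ and Picard number $1$, which is impossible, and therefore $X$ is minimal. Thus $K_X$ is nef and $K_X^2=0$. By \cref{>0} and $b(X)>0$ we may write $\NE(X)=\bbR_{\ge0}[E]+\bbR_{\ge0}[F]$, where $E$ is an irreducible curve with $E^2<0$ and $F$ is the class of a general fibre of the elliptic fibration, so $F^2=0$, $F$ is nef, and $E\cdot F>0$ by \cref{>0}(i); moreover $E$ is the unique negative curve (any negative curve is numerically proportional to $E$ by \cref{>0}(ii), hence equals $E$ since distinct irreducible curves meet nonnegatively), so $X$ satisfies the BNC with $b(X)=-E^2$. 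Finally, a nef class of self-intersection $0$ in this situation must be proportional to $F$: it lies on the boundary of the nef cone, and the other boundary ray consists of classes orthogonal to $E$, which have positive self-intersection by the Hodge index theorem. Hence $K_X\equiv\alpha F$ for some $\alpha\in\bbQ_{>0}$, with $\alpha>0$ because $\kappa(X)=1$.

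Next I would estimate $l_C$. Writing a curve $C$ as $C\equiv a_1E+a_2F$ with $a_1,a_2\ge0$: if $C^2<0$ then $C=E$ and $l_E=K_X\cdot E=\alpha(E\cdot F)$; if $C^2=0$ then $C\equiv a_2F$ and $K_X\cdot C=\alpha a_2F^2=0$, so $l_C=0$. If $C^2>0$ then $a_1,a_2>0$ and $C\ne E$, so $C\cdot E=a_1E^2+a_2(E\cdot F)\ge0$, giving $a_2(E\cdot F)\ge a_1|E^2|$ and hence
\[
C^2=a_1(C\cdot E)+a_1a_2(E\cdot F)\ge a_1a_2(E\cdot F)\ge a_1^2|E^2|;
\]
since $C\cdot F=a_1(E\cdot F)$ is a positive integer, $a_1\ge(E\cdot F)^{-1}$, so
\[
l_C=\frac{\alpha(C\cdot F)}{C^2}\le\frac{\alpha a_1(E\cdot F)}{a_1^2|E^2|}=\frac{\alpha(E\cdot F)}{a_1|E^2|}\le\frac{\alpha(E\cdot F)^2}{|E^2|}.
\]

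It remains to handle a curve $D$ with $D^2>0$ and $l_D>1$. The previous estimate gives $1<l_D\le\alpha(E\cdot F)^2/\big((D\cdot F)|E^2|\big)$, so the positive integer $D\cdot F$ satisfies $D\cdot F<\alpha(E\cdot F)^2/|E^2|$, whence $D^2<K_X\cdot D=\alpha(D\cdot F)<\alpha^2(E\cdot F)^2/|E^2|$ is bounded by a constant depending only on $X$; since $h^0(\mathcal O_X(D))\ge1$, this yields $D^2\le m(X)h^0(\mathcal O_X(D))$ once $m(X)\ge\alpha^2(E\cdot F)^2/|E^2|$. Taking $m(X)$ to be the maximum of $1$, $\alpha(E\cdot F)$, $\alpha(E\cdot F)^2/|E^2|$ and $\alpha^2(E\cdot F)^2/|E^2|$, all three hypotheses of \cref{MainProp} hold, which proves the lemma. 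The only nonformal ingredient is the identification $K_X\equiv\alpha F$ with $F$ the square-zero nef extremal ray; once that is in hand everything reduces to the intersection-number bookkeeping above, the key point being that curves with $l_D>1$ and $D^2>0$ automatically have bounded self-intersection, so no vanishing theorem is needed for the second condition of \cref{MainProp}.
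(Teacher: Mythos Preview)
Your proof is correct and follows essentially the same strategy as the paper: verify the hypotheses of \cref{MainProp} by identifying $K_X$ as a positive multiple of the fibre class $F$ and then bounding $l_C$ and $D^2$ via elementary intersection calculations. The only notable differences are expository: the paper obtains $K_X\equiv\alpha F$ by citing Iitaka's result that $|mK_X|$ defines the elliptic fibration, whereas you deduce it from the shape of the nef cone and Hodge index; and where the paper splits the case $D^2>0$ according to whether $ma_1\ge1$ (which forces $l_D\le1$) or $ma_1<1$ (which bounds $D^2$), you extract the bound on $D^2$ directly from the hypothesis $l_D>1$.
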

\begin{proof}
Since $\kappa(X)=1, \rho(X)=2$ and $\kappa(X)$ is a birational invariant, $K_X$ is nef and semi-ample.
By \cite[Proposition IX.2]{Beauville96}, we have $K_X^2=0$ and there is a surjective morphism $p: X\to B$ over a smooth curve $B$, whose general fibre $F$ is an elliptic curve.
Since $b(X)>0$, $X$ has exactly one negative curve $C$ by \cite[Claim 2.14]{Li19}.
In fact, $p$ is an Iitaka fibration of $X$.
In \cite{Iitaka70}, S. Iitaka proved that if $m$ is any natural number divisible by 12 and $m\ge86$, then $|mK_X|$ defines the Iitaka fibration.
Hence, there exists a curve $F$ as a general fiber of $p$ such that $F\equiv mK_X$.
Then by \cref{>0}(i)(ii), $\NE(X)=\bbR_{\ge0}[F]+\bbR_{\ge0}[C]$.
Note that $(F\cdot C)>0$ since $\rho(X)=2$.
Take a curve $D\equiv a_1F+a_2C$ with $a_1,a_2\ge0$.
 By  \cref{>0}(iii), $D^2>0$ if and only if $a_1,a_2>0$, $D^2=0$ if and only if $D\equiv a_1F$.

 Now suppose $D\equiv a_1F$. Then $l_D=0$.
 Note that $h^1(\mathcal O_X(D))\le q(X)h^0(\mathcal O_X(D))$ by Riemann-Roch theorem and Equation (\ref{eq1}).
 This ends the proof of this case.

 Now suppose $D^2>0$. Then $(F\cdot D)\ge1$ and $(C\cdot D)\ge0$, which imply that
\begin{equation}\label{eq6}
                                                 a_2\ge(F\cdot C)^{-1}, a_1\ge a_2(-C^2)(F\cdot C)^{-1}.
\end{equation}
Therefore, by Equation (\ref{eq6}),
\begin{equation*}\begin{split}
                        l_D&=\frac{(F\cdot D)}{m(a_1(F\cdot D)+a_2(C\cdot D))}\\&\le (F\cdot C)^2(-mC^2)^{-1}.
\end{split}\end{equation*}
Hence, there exists a positive constant $m(X)$ such that $l_D\le m(X)$ for every curve $D$ on $X$.
If $ma_1\ge1$, then  $(K_X-D)D=(1-ma_1)(K_X\cdot D)-a_2(C\cdot D)\le0$.
As a result, $h^1(\mathcal O_X(D))=q(X)h^0(\mathcal O_X(C))$ by Riemann-Roch theorem and Equation (\ref{eq1}).
If $ma_1<1$, then by Equation (\ref{eq6}), $a_2<(F\cdot C)(-mC^2)^{-1}$. So $D^2<2m^{-2}(F\cdot C)^2(-C^2)^{-1}$. Hence, by  \cref{MainProp}, $X$ satisfies  \cref{BH}.
\end{proof}
Now we give a useful result of the Nef cone when $\rho(X)=2$.
\begin{prop}\label{nef}
Let $X$ be a smooth projective surface with $\rho(X)=2$. If $X$ has two negative curves $C_1$ and $C_2$, then the nef cone $\mathrm{Nef}(X)$ is
\begin{equation*}
 \mathrm{Nef}(X)=\bigg\{ a_1C_1+a_2C_2\bigg| a_1(C_1\cdot C_2)\ge a_2(-C_2^2), a_2(C_1\cdot C_2)\ge a_1(-C_1^2), a_1>0, a_2>0\bigg\}.
\end{equation*}
\end{prop}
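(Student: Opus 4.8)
The plan is to identify $\Nef(X)$ as the set of classes pairing non-negatively with both extremal rays of $\NE(X)$, which becomes completely explicit once $\rho(X)=2$.

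First I would pin down the Mori cone. By \cref{>0}(i), $\NE(X)=\bbR_{\ge0}[f_1]+\bbR_{\ge0}[f_2]$ with $f_1^2\le0$, $f_2^2\le0$ and $f_1\cdot f_2>0$, and by \cref{>0}(ii) every negative curve lies on one of these two extremal rays. Since two distinct curves with negative self-intersection and non-negative mutual intersection cannot lie on the same ray (if $[C_1]=\mu[C_2]$ then $C_1^2=\mu(C_1\cdot C_2)$ forces $\mu<0$, whence the nonzero effective class $C_1+(-\mu)C_2\equiv0$, impossible), the classes $[C_1]$ and $[C_2]$ are precisely the two extremal rays. Hence $\NE(X)=\bbR_{\ge0}[C_1]+\bbR_{\ge0}[C_2]$, $C_1\cdot C_2>0$, and $\{C_1,C_2\}$ is an $\bbR$-basis of $N^1(X)_{\bbR}$; write an arbitrary $\bbR$-divisor as $D\equiv a_1C_1+a_2C_2$.

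Next, $D$ is nef iff $D\cdot\gamma\ge0$ for all $\gamma\in\NE(X)$, and since $\NE(X)$ is generated by its two rays, this holds iff $D\cdot C_1\ge0$ and $D\cdot C_2\ge0$. Expanding
\[
D\cdot C_1=a_1C_1^2+a_2(C_1\cdot C_2),\qquad D\cdot C_2=a_1(C_1\cdot C_2)+a_2C_2^2,
\]
these two conditions read $a_2(C_1\cdot C_2)\ge a_1(-C_1^2)$ and $a_1(C_1\cdot C_2)\ge a_2(-C_2^2)$, exactly the inequalities in the statement; conversely any $D\equiv a_1C_1+a_2C_2$ with $a_1,a_2>0$ satisfying them pairs non-negatively with both generators of $\NE(X)$ and so is nef. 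It then remains to see that a nonzero nef class forces $a_1>0$ and $a_2>0$. I would deduce this either from $\Nef(X)\subseteq\NE(X)$ (a multiple of an ample divisor is effective, so $\Amp(X)\subseteq\NE(X)$ and $\NE(X)$ is closed), which gives $a_1,a_2\ge0$, after which $a_1=0$ would force $D\cdot C_2=a_2C_2^2\le0$ and hence $a_2=0$; or directly from the Hodge index theorem $(C_1\cdot C_2)^2>C_1^2C_2^2$, which rules out any solution of the two inequalities having a non-positive coordinate other than the origin (and, incidentally, shows the cone they cut out is genuinely two-dimensional). Assembling the three steps gives the stated description of $\Nef(X)$.

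The only non-formal point — and the main thing to get right — is this last one: that the nef cone lies strictly inside the cone spanned by the two negative curves, together with the observation that the $\ge$'s in the statement correctly retain the two boundary rays $D\cdot C_i=0$ (which are nef but not ample). Everything else is forced by $\rho(X)=2$ and dualization.
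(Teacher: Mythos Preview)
Your proof is correct and follows essentially the same route as the paper: invoke \cref{>0} to identify $\NE(X)=\bbR_{\ge0}[C_1]+\bbR_{\ge0}[C_2]$, then dualize by testing against the two generators. You are in fact more careful than the paper's two-line argument, since you justify explicitly why $C_1$ and $C_2$ lie on distinct extremal rays and why a nonzero nef class must have both coefficients strictly positive---points the paper's proof leaves implicit.
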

\begin{proof}
Since $\rho(X)=2$, $\NE(X)=\bbR_{\ge0}[C_1]+\bbR_{\ge0}[C_2]$  by \cref{>0}(ii).
As a result, an effective $\bR$- divisor $D\equiv a_1C_1+a_2C_2$ is nef if and only if $D\cdot C_1\ge0$ and $D\cdot C_2\ge0$, which imply the desired result.
\end{proof}
\begin{lemma}\label{twoneg}
Let $X$ be a smooth projective surface with $\rho(X)=2$.
If $X$ has two negative curves $C_1$ and $C_2$, then $X$ satisfies  \cref{BH}.
\end{lemma}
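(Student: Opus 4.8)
The plan is to verify the two hypotheses of \cref{MainProp}, since \cref{>0} describes the geometry of $X$ completely. As $C_1$ and $C_2$ are negative curves, by \cref{>0}(ii) they span the two extremal rays of $\NE(X)=\bbR_{\ge0}[C_1]+\bbR_{\ge0}[C_2]$ (an interior curve would have positive self-intersection by \cref{>0}(iii)), and any negative curve on $X$ equals $C_1$ or $C_2$; in particular $X$ satisfies the BNC with $b(X)=\max\{-C_1^2,-C_2^2\}$. Put $k_i:=-C_i^2>0$, $d:=C_1\cdot C_2$ and $m_i:=K_X\cdot C_i$; by \cref{>0}(i) we have $d>0$, and since the intersection form on $\Num(X)$ has signature $(1,1)$ (Hodge index theorem) the Gram matrix of $(C_1,C_2)$ has negative determinant, i.e.\ $d^2>k_1k_2$.

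First I would bound $l_C$ over all curves $C$ on $X$. If $C^2<0$, then $C\in\{C_1,C_2\}$ and $l_C=m_i$ is a constant. No curve has $C^2=0$: by \cref{>0}(ii) such a curve is numerically a positive multiple of $C_1$ or of $C_2$, which have negative self-intersection. If $C^2>0$, then by \cref{>0}(iii) $C$ is nef and $C\equiv a_1C_1+a_2C_2$ with $a_1,a_2>0$, so $\alpha:=C\cdot C_1$ and $\beta:=C\cdot C_2$ are non-negative integers, not both zero since $C^2=a_1\alpha+a_2\beta>0$. Inverting the linear system that gives $(\alpha,\beta)$ in terms of $(a_1,a_2)$ yields
\begin{equation*}
(d^2-k_1k_2)\,C^2=k_2\alpha^2+2d\alpha\beta+k_1\beta^2,\qquad (d^2-k_1k_2)(K_X\cdot C)=(k_2m_1+dm_2)\alpha+(dm_1+k_1m_2)\beta,
\end{equation*}
whence $l_C=\dfrac{(k_2m_1+dm_2)\alpha+(dm_1+k_1m_2)\beta}{k_2\alpha^2+2d\alpha\beta+k_1\beta^2}$. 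Bounding the denominator below by $\min\{k_1,k_2\}\max\{\alpha,\beta\}^2$ and the numerator above by $\big(|k_2m_1+dm_2|+|dm_1+k_1m_2|\big)\max\{\alpha,\beta\}$, and using $\max\{\alpha,\beta\}\ge1$, shows that $l_C$ is bounded above by a constant depending only on $X$. Hence $l_C\le m(X)$ for every curve $C$, for a suitable $m(X)>0$.

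Next I would check the remaining hypothesis of \cref{MainProp}. Let $D$ be a curve with $D^2>0$ and $l_D>1$; as above $D$ is nef, $\alpha:=D\cdot C_1$ and $\beta:=D\cdot C_2$ are non-negative integers not both zero, and $l_D>1$ reads $k_2\alpha^2+2d\alpha\beta+k_1\beta^2<(k_2m_1+dm_2)\alpha+(dm_1+k_1m_2)\beta$. The same two estimates now force $\min\{k_1,k_2\}\max\{\alpha,\beta\}^2<\big(|k_2m_1+dm_2|+|dm_1+k_1m_2|\big)\max\{\alpha,\beta\}$, i.e.\ $\max\{\alpha,\beta\}$ is bounded by a constant depending only on $X$. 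Thus $(\alpha,\beta)$, and therefore the numerical class of $D$ and the number $D^2$, ranges over a finite set; after enlarging $m(X)$ so that it also dominates $\max\{D^2:D\text{ a curve},\,D^2>0,\,l_D>1\}$ and using $h^0(\mathcal O_X(D))\ge1$ for the effective divisor $D$, we obtain $D^2\le m(X)h^0(\mathcal O_X(D))$. Then \cref{MainProp} applies and $X$ satisfies \cref{BH}.

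The computations here are routine; the one genuinely load-bearing point is in the last paragraph, namely that $l_D>1$ together with $\rho(X)=2$ confines $D$ to finitely many numerical classes, which is what makes the second hypothesis of \cref{MainProp} almost automatic. This — and the equally elementary remark that two negative curves on a surface with $\rho(X)=2$ leave no room for a curve of self-intersection $0$ — is where the hypothesis is used. The only mild subtlety is to keep numerical and linear equivalence apart, but $l_D$, $D^2$ and $K_X\cdot D$ depend only on the numerical class of $D$, while $h^0(\mathcal O_X(D))\ge1$ for any effective $D$, so no difficulty arises.
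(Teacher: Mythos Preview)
Your proof is correct and follows the same overall template as the paper---verify BNC and the two numerical hypotheses of \cref{MainProp}---but the way you carry out the verification is genuinely different.

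The paper first invokes \cite[Claim~2.11]{Li19} to conclude $\kappa(X)\ge0$, so that $K_X\equiv aC_1+bC_2$ with $a,b\ge0$; it then works in the coefficient coordinates $(a_1,a_2)$ of a curve $D\equiv a_1C_1+a_2C_2$, uses \cref{nef} to bound $a_1,a_2$ from below, and bounds $l_D$ via $l_D=\frac{a(D\cdot C_1)+b(D\cdot C_2)}{a_1(D\cdot C_1)+a_2(D\cdot C_2)}\le\max\{a,b\}/c$. The finiteness step is a case split: if $a_1>a$ and $a_2>b$ then $(K_X-D)\cdot D<0$ so $l_D<1$; otherwise $a_1\le a$ or $a_2\le b$, and \cref{nef} then bounds the other coefficient, hence $D^2$. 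You instead work in the dual integer coordinates $(\alpha,\beta)=(D\cdot C_1,D\cdot C_2)$, invert the Gram matrix (using Hodge index for $d^2>k_1k_2$), and observe directly that $l_D$ is linear over quadratic in $\max\{\alpha,\beta\}$; the same inequality with $l_D>1$ immediately bounds $\max\{\alpha,\beta\}$, so the relevant curves fall into finitely many numerical classes. Your route avoids the external input $\kappa(X)\ge0$ and the description of $\Nef(X)$, and needs no sign assumption on $K_X\cdot C_i$, which makes it more self-contained; the paper's route is a bit more geometric in that it reads the bounds off the position of $K_X$ in the cone. Both feed into \cref{MainProp} in the same way.
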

\begin{proof}
Note that $\NE(X)=\bbR_{\ge0}[C_1]+\bbR_{\ge0}[C_2]$ by \cref{>0}(ii).
We first show that there exists a positive constant $m(X)$ such that $l_C\le m(X)$ for every curve on $X$.
By \cite[Claim 2.11]{Li19}, $\kappa(X)\ge0$, i.e., there exists a positive integral number $m$ such that $h^0(X, \mathcal O_X(mK_X))\ge0$.
Therefore, $K_X$ is $\mathbb Q$-effective  divisor.
As a result, $K_X\equiv  aC_1+bC_2$ with $a, b\in\bbR_{\ge0}$.
Take a curve $D\equiv a_1C_1+a_2C_2$ with $a_1, a_2>0$, then by  \cref{>0}(iii), $D^2>0$.
As a result, $D\cdot C\ge0$ and $X$ has no any curves with zero self-intersection.
$D^2\ge1$ implies that either $D\cdot C_1\ge1$ and $D\cdot C_2\ge0$ or $D\cdot C_1\ge0$ and $D\cdot C_2\ge1$.
Without loss of generality, suppose that $D\cdot C_2\ge0$ and $D\cdot C_1\ge1$.
Then $a_1\ge (C_1^2+(C_1\cdot C_2)^2(-C_2^2)^{-1})^{-1}$. Here,  $C_1^2+(C_1\cdot C_2)^2(-C_2^2)^{-1}>0$ since $\rho(X)=2$. By symmetry and \cref{nef},
\begin{equation*}
                                               a_i\ge c:=\min\bigg\{(C_i^2+\frac{(C_1\cdot C_2)^2}{-C_j^2})^{-1}, \frac{-C_j^2}{(C_1\cdot C_2)}(C_i^2+\frac{(C_1\cdot C_2)^2}{-C_j^2})^{-1}\bigg\},
\end{equation*}
where $i\ne j\in\{1,2\}$. Therefore,
\begin{equation*}\begin{split}
                                                    l_D&=\frac{a(D\cdot C_1)+b(D\cdot C_2)}{a_1(D\cdot C_1)+a_2(D\cdot C_2)}\\&\le\max\bigg\{\frac{a}{c}, \frac{b}{c}\bigg\}.
\end{split}\end{equation*}
So  there exists a positive constant $m(X)$ such that $l_D\le m(X)$ for every curve $D$ on $X$.

 If $a_1>a$ and  $a_2>b$, then
 \begin{equation*}
 (K_X-D)D=(a-a_1)(D\cdot C_1)+(b-a_2)(D\cdot C_2)<0.
 \end{equation*}
This and Equation (\ref{eq1}) imply that
\begin{equation*}\begin{split}
                                             h^1(\mathcal O_X(D))&=h^0(\mathcal O_X(D))+h^2(\mathcal O_X(D))+\frac{(K_X\cdot D)-D^2}{2}-\chi(\mathcal O_X)\\&\le q(X)h^0(\mathcal O_X(D)).
\end{split}\end{equation*}
If $a_1\le a$ or  $a_2\le b$, then by \cref{nef}, $a_2\le a(C_1\cdot C_2)(-C_2^2)^{-1}$ or $a_1\le b(C_1\cdot C_2)(-C_1^2)^{-1}$.
As a result,
\begin{equation*}
D^2\le \max\bigg\{2a^2(C_1\cdot C_2)^2(-C_2^2)^{-1},2b^2(C_1\cdot C_2)^2(-C_1^2)^{-1}\bigg\}.
\end{equation*}
Therefore, $X$ satisfies \cref{BH}  by \cref{MainProp}.
\end{proof}
\begin{proof}[Proof of \cref{Main-thm}]
It follows from \cref{kappa(X)=1,twoneg}.
\end{proof}
We end by asking the following questions.
\begin{question}
Is \cref{BH} true for any smooth projective surface $X$ with $\rho(X)=2$ and $b(X)>0$?
\end{question}
\begin{question}
Let $X$ be a smooth projective surface with $\rho(X)=r$ and $C_i$ some curves on $X$.
Suppose that $\NE(X)=\sum_{i=1}^r\bR_{\ge0}[C_i]$.
Is \cref{BH} true for $X$?
\end{question}

\section*{Acknowledgments}
The author would like to thank Prof. Meng Chen, Prof. Rong Du and Prof. De-Qi Zhang  for their constant encouragement and the anonymous referee for several suggestions.
The author is supported  by the National Natural Science Foundation of China (Grant No. 12071078).

\end{document}